
\documentclass[11pt]{amsart}

\usepackage{amssymb}
\usepackage{amsmath}
\usepackage{amsthm}
\usepackage{graphicx,psfrag}
\usepackage{enumerate}

\usepackage[dvips]{hyperref}
\hypersetup{pdfborder={}} 

\theoremstyle{plain}
\newtheorem{theorem}{Theorem}[section]
\newtheorem{lemma}[theorem]{Lemma}

\theoremstyle{definition}

\theoremstyle{remark}
\newtheorem{remark}[theorem]{Remark}
\newtheorem*{remark*}{Remark}

\numberwithin{figure}{section}

\begin{document}

\title{Small 3-manifolds with large Heegaard distance}
\author{Tao Li}
\address{Department of Mathematics \\
 Boston College \\
 Chestnut Hill, MA 02467}
\email{taoli@bc.edu}
\thanks{Partially supported by an NSF grant}

\begin{abstract}
We construct examples of closed non-Haken hyperbolic 3-manifolds with a Heegaard splitting of arbitrarily large distance. 
\end{abstract}

\maketitle

\section{Introduction}\label{Sintro} 
A Heegaard splitting of a closed orientable 3-manifold $M$ is a decomposition of $M$ into two handlebodies along an embedded surface called Heegaard surface.  
A very useful tool in studying Heegaard splittings is the curve complex. Let $F$ be a closed orientable surface of genus at least 2.  The curve complex of $F$, introduced by Harvey \cite{Har}, is the complex whose vertices are the isotopy classes of essential simple closed curves in $F$, and $k+1$ vertices determine a $k$-simplex if they are represented by pairwise disjoint curves.  We denote the curve complex of $F$ by $\mathcal{C}(F)$.  For any two vertices $x$, $y$ in $\mathcal{C}(F)$, the distance $d(x,y)$ is the minimal number of 1-simplices in a simplicial path jointing $x$ to $y$.  If $F$ bounds a handlebody $V$, then the disk complex $\mathcal{D}(V)$ is the subcomplex of $\mathcal{C}(F)$ containing only the vertices represented by boundary curves of compressing disks in $V$.  Given a Heegaard splitting $M=V\cup_F W$, the distance $d(V,W)$, introduced by Hempel \cite{He}, is the distance between $\mathcal{D}(V)$ and $\mathcal{D}(W)$ in the curve complex $\mathcal{C}(F)$.

An incompressible surface in $M$ is an embedded surface which contains no essential loop bounding a compressing disk in $M$.  Both Heegaard surface and incompressible surface play important roles in the study of 3-manifold topology, and there are some intriguing connections between them, for example, see \cite{L1, L2}.  
In \cite{H}, Hartshorn proved that if the distance $d(V,W)$ is large, then $M$ contains no small-genus incompressible surface.  A natural question is whether there is a 3-manifold $M$ that has large Heegaard distance but contains no incompressible surface at all, i.e., $M$ is non-Haken (or small).  A positive answer to this question is expected, but it is surprisingly hard to construct a concrete example.

There have been many constructions of non-Haken 3-manifolds, for example \cite{FH, HT, O}.  However, in some sense, the 3-manifolds in these constructions are relatively simple, but large Heegaard distance usually means that the 3-manifold is complicated.  On the other hand, many complicated 3-manifolds do contain incompressible surfaces, e.g. \cite{FM, LM1}.

In this paper, we construct the first examples of closed non-Haken hyperbolic 3-manifolds with a Heegaard splitting of arbitrarily large distance.  Note that by a theorem of Scharlemann and Tomova (also see \cite{L3, L4}) if the distance of a Heegaard splitting $M=V\cup_F W$ is sufficiently large, then any minimal-genus Heegaard surface is isotopic to $F$.  

\begin{theorem}\label{Tmain}
For any $g$, there are closed orientable non-Haken 3-manifolds with a genus-$g$ Heegaard splitting of arbitrarily large distance.
\end{theorem}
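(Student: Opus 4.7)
The plan is to construct, for each $n$, a closed orientable 3-manifold $M_n$ as the gluing $V \cup_{\phi^n} W$ of two genus-$g$ handlebodies along their common boundary $F$ via the $n$-th power of a carefully chosen pseudo-Anosov $\phi$ of $F$. I would choose $\phi$ so that its stable and unstable laminations are far from the disk complexes $\mathcal{D}(V)$ and $\mathcal{D}(W)$ in $\mathcal{C}(F)$. By Hempel's construction together with the bounded geodesic image theorem of Masur and Minsky, the distance $d(V,W)$ in $M_n$ then grows linearly in $n$, so the large-distance hypothesis is automatic for $n\gg 0$. Hyperbolicity follows from geometrization once non-Hakenness is established, since large distance rules out essential spheres and tori and excludes small Seifert fibered structures.

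The real work is showing non-Hakenness. Hartshorn's theorem only rules out incompressible surfaces of bounded genus, while here the genera are allowed to be unbounded, so the bulk of the argument must address high-genus incompressible surfaces directly. The strategy will be to produce a finite list of branched surfaces $B_1,\dots,B_k$ in $V$ and $B_1',\dots,B_\ell'$ in $W$ such that every closed incompressible surface in $M_n$ is, after isotopy, fully carried by some $B_i$ on the $V$-side and some $B_j'$ on the $W$-side. Writing $\tau_i$ and $\tau_j'$ for the train tracks on $F$ cut out by these branched surfaces, the intersection $S\cap F$ of such a surface $S$ with $F$ would then be a measured lamination on $F$ carried simultaneously by $\tau_i$ and by $\phi^n(\tau_j')$.

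The main obstacle, and the heart of the proof, is to show that for a suitable choice of $\phi$, no such lamination exists once $n$ is large. I would exploit the dynamics of $\phi$ on the space of measured laminations: under iteration, any lamination carried by $\phi^n(\tau_j')$ projectively approaches the unstable lamination $\lambda^+$ of $\phi$, and the choice of $\phi$ will guarantee that $\lambda^+$ is not carried by any $\tau_i$. Making this quantitative for all $n$ simultaneously, using the finiteness of the list of branched surfaces (which should come from the author's earlier analysis in \cite{L1, L2}), is the technical crux: one needs genuine uniform control on the putative carrying relation $\phi^n(\tau_j')\prec\tau_i$, not merely an asymptotic statement. I expect this will require an explicit bound coming from the Perron--Frobenius spectral data of $\phi$, combined with a careful enumeration of the finitely many relevant $(B_i,B_j')$ pairs.
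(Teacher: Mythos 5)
Your proposal diverges substantially from the paper's argument, and the divergence exposes a genuine gap. The paper does not glue handlebodies by a high power of a pseudo-Anosov. Instead it starts from the standard genus-$g$ splitting $S^3 = V\cup_F W$, takes Agol's pure braid construction in $S^2\times I$ (a link whose complement contains no closed essential surface), lifts the braid to a collection of loops $\hat{\mathcal{B}}$ in $F\times I$ via a double branched cover $F\to S^2$, twists those loops so they cover a derived train track in the sense of Lustig--Moriah, and finally performs $\tfrac{1}{n_i}$-Dehn surgery on them. Large distance then comes from the Lustig--Moriah fat train track criterion (Theorem~4.7 of \cite{LM}), and non-Hakenness comes from showing---by extending Agol's normal surface/level-surface argument to the double branched cover---that any closed incompressible surface in the surgered manifold can be isotoped off a level surface $F\times\{t\}$, hence lies in a handlebody, a contradiction. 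Hatcher's boundary slope theorem is used along the way to push a putative incompressible surface off the surgery curves.

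The gap in your approach is in the non-Hakenness step, which you correctly identify as ``the heart of the proof'' but do not actually carry out. You posit a finite list of branched surfaces $B_1,\dots,B_k$ in $V$ (and $B_1',\dots,B_\ell'$ in $W$) fully carrying, after isotopy, every closed incompressible surface of $M_n$. No such finiteness result is available here. The Floyd--Oertel finiteness theorem requires incompressible \emph{and} boundary-incompressible surfaces in a manifold with incompressible boundary; a handlebody has compressible boundary, and the pieces $S\cap V$, $S\cap W$ of a closed incompressible $S\subset M_n$ can be boundary-compressible in $V$ and $W$. The references \cite{L1,L2} you invoke concern Heegaard surfaces and strongly irreducible splittings, not closed incompressible surfaces, so they do not supply the needed finiteness. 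Without such a finite list, the subsequent dynamical argument (laminations carried by $\phi^n(\tau_j')$ converging projectively to the unstable lamination) has nothing to bite on. Even granting a finite list, you would also need to rule out the stable lamination of $\phi$ being carried by some $\tau_j'$, else the projective convergence fails on exactly the carried set you care about. The paper's use of Agol's construction is precisely what circumvents this: Agol's braid complement is engineered so that incompressible surfaces admit a concrete normal-surface description (tubings of pants around the drilled curves), and this description persists in the double branched cover, yielding the level-surface conclusion without any appeal to a general finiteness theorem for branched surfaces in handlebodies.

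By contrast, the parts of your proposal concerning distance growth are standard and correct in spirit (Hempel, Masur--Minsky bounded geodesic image), and the hyperbolicity remark is fine via geometrization once non-Hakenness and large distance are in hand. But the proposal as written does not prove the theorem: it replaces the paper's explicit combinatorial control of incompressible surfaces with a conjectural finiteness statement that is known to fail in the relevant generality.
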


The motivation for this research is to explore some deeper relation between Heegaard surfaces, incompressible surfaces and geometry of the 3-manifolds.  Many interesting questions in 3-manifold topology concerning non-Haken 3-manifolds are still open and it is possible that one can use Heegaard splittings to study these questions.

Our main construction is a combination of the constructions in \cite{A} and \cite{LM}.  In \cite{A}, Agol constructed a small link which is a pure braid in $S^2\times S^1$.  We basically take Agol's construction in $S^2\times I$ and consider its double branched cover.  The double branched cover is homeomorphic to $F\times I$ and we use it as a neighborhood of our Heegaard surface $F$.  To guarantee the Heegaard distance is large, we also use the construction and criterion given by Lustig and Moriah \cite{LM} on Heegaard distance.

We organize the paper as follows.  In section~\ref{SA}, we briefly explain Agol's construction and show that this construction also works for an immersed surface which lifts to an incompressible surface in a double branched cover.  In section~\ref{SLM}, we review some results in \cite{LM}.  We finish the proof of Theorem~\ref{Tmain} in section~\ref{SP}.

\section{Agol's construction and its double branched cover}\label{SA}

We first briefly review a construction of Agol in \cite{A}.  The operation in \cite{A} is on a pure braid in $S^2\times S^1$, or equivalently, on $\Sigma\times S^1$, where $\Sigma$ is the $n$-punctured sphere.  In this paper, we only consider $\Sigma\times I$.  We use the same notations as those in \cite{A} and refer the reader to \cite{A} for more detailed discussions and proofs.

As in \cite{A}, we start with a pants decomposition of $\Sigma$ and use a special path in the pants decomposition graph $\mathcal{P}(\Sigma)^{(1)}$ (see \cite[Definition 2.1]{A}).  

Let $D_n$ be a regular $n$-sided polygon and $\gamma$ its boundary. 
As in \cite{A}, we view $\Sigma$ as a 2-sphere obtained by gluing two copies of $D_n$ along $\gamma$, with punctures at the $n$ vertices.  We cyclically label the $n$ edges of $D_n$ by $1,\dots, n$. For each pair of edges $\{i, j\}$, there is a loop $\beta_{i,j}$ in $\Sigma$ which meets $\gamma$ exactly twice at the edges $i$ and $j$.  

We start with an initial pants decomposition of $\Sigma$ given by a set of loops $P_0=\{\beta_{1,3},\beta_{1,4},\dots,\beta_{1, n-1}\}$.  Let $C_0$ be a path of pants decompositions in $\mathcal{P}(\Sigma)^{(1)}$ denoted by $C_0=P_0\to P_1\to\cdots\to P_{n-3}$, where $P_{n-3}=\{\beta_{2,4},\beta_{2,5},\dots,\beta_{2, n}\}$ and $P_k$ is obtained by replacing the first $k$ loops of $P_0$ by the first $k$ loops of $P_{n-3}$, as illustrated in Figure~\ref{Fpants} (also see \cite[Figure 5]{A}).  Moreover,we can continue the path $C_0$ in the same fashion to get a closed path $C$ as follows.  Let $P_{j(n-3)}=\{\beta_{j+1,j+3},\beta_{j+1,j+4},\dots,\beta_{j+1, j+n-1}\}$ where indices are taken (mod $n$). For each $k$, let $P_{j(n-3)+k}$ be obtained by replacing the first $k$ loops of $P_{j(n-3)}$ with the first $k$ loops of $P_{(j+1)(n-3)}$.  As $P_{n(n-3)}=P_0$.  This gives a closed path $C=P_0\to P_1\to\cdots\to P_{n(n-3)}$ in $\mathcal{P}(\Sigma)^{(1)}$.

\begin{figure}
\begin{center}
\psfrag{1}{$1$}
\psfrag{2}{$2$}
\psfrag{3}{$3$}
\psfrag{4}{$4$}
\psfrag{5}{$5$}
\psfrag{6}{$6$}
\psfrag{A}{$P_0$}
\psfrag{B}{$P_1$}
\psfrag{C}{$P_2$}
\psfrag{D}{$P_3$}
\includegraphics[width=3in]{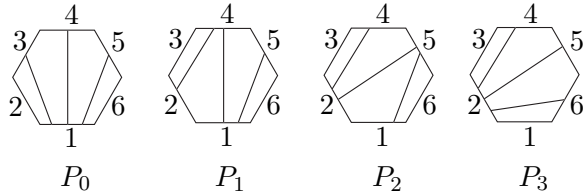}
\caption{Pants decompositions in the path $C_0$ with $n=6$}\label{Fpants}
\end{center}
\end{figure}

Each pair of pants in the pants decomposition $P_k$ is either a twice-punctured disk in $\Sigma$ bounded by $\beta_{i,j}$ with $i-j\equiv\pm 2$ (mod $n$), or a once-punctured annulus between the circles $\beta_{i,j}$ and $\beta_{i,j+1}$ or between $\beta_{i,j}$ and $\beta_{i+1,j}$.

As in \cite{A}, let $B_{i,j}$ be a horizontal loop in $\Sigma\times (0,1)$ corresponding to the circle $\beta_{i,j}$, and these horizontal loops are positioned so that $B_{i,j}$ and $B_{i-1, j+1}$ are at the same level.   Moreover, the pants in the pants decompositions $P_k$ ($k=1,\dots, n(n-3)$) can be viewed as a collection of twice-punctured disks and once-punctured annuli bounded by these curves $B_{i,j}$ and forming a 2-complex whose 1-skeleton is the union of these curves $B_{i,j}$, see \cite[Figures 7, 8, 9]{A}.  We use $X$ to denote this 2-complex.

Let $M_C$ be the manifold obtained from $\Sigma\times I$ by drilling out these loops $B_{i,j}$.  
Let $Z$ be the union of the $n$ punctures of $\Sigma$ and let $\Gamma=Z\times I$ be the braid in $S^2\times I$.  Here we view $\Sigma\cup Z= S^2$, $(\Sigma\times I)\cup\Gamma=S^2\times I$, and view $M_C\cup\Gamma$ as the manifold obtained from $S^2\times I$ by drilling out the loops $B_{i,j}$.

\vspace{10pt}
\noindent
\textbf{Double branched cover}
\smallskip

Next we consider a double branched cover of $S^2\times I$.

Let $F$ be a closed orientable surface of genus $g\ge 2$.  Let $\iota\colon  F\to F$ be a standard involution/rotation which gives rise to a standard double branched covering $\phi\colon  F\to S^2$ with $2g+2$ branch points.  We may extend $\phi$ to a double branched covering $\phi\colon F\times I\to S^2\times I$ with branch set a union of $2g+2$ vertical arcs of the form $\{x\}\times I$.  

Let $Z$ be the union of the branch points in $S^2$ and $\Gamma=Z\times I$ the branch set in $S^2\times I$.  By viewing $Z$ as punctures and letting $\Sigma=S^2-Z$, we can construct a collection of horizontal loops $B_{i,j}$'s in $\Sigma\times I$ as above.  Let $\mathcal{B}$ be the union of these loops $B_{i,j}$'s in $\Sigma\times I$ and let $\hat{\mathcal{B}}=\phi^{-1}(\mathcal{B})$ be the preimage of these loops in $F\times I$.

Let $S$ be a compact orientable incompressible surface properly embedded in $(F\times I)-\hat{\mathcal{B}}$.  The main goal in this section is to study its image $\phi(S)$ in $M_C\cup\Gamma$.  

Since each pair of pants $P$ in the 2-complex $X$ described above is isotopic to a pair of pants in a pants decomposition of $\Sigma$, after some homotopy on $\phi$ if necessary, we may assume that its preimage $\phi^{-1}(P)$ is an embedded surface with boundary in $\hat{\mathcal{B}}$ and $\phi\colon\phi^{-1}(P)\to P$ is a double branched covering.  
A pair of pants $P$ in $X$ is either a once-punctured annulus or a twice-punctured disk with punctures at $\Gamma$. 
If $P$ is a twice-punctured disk, then as shown in Figure~\ref{Fcover}(a), $\phi^{-1}(P)$ is an annulus with two punctures at $\phi^{-1}(\Gamma)$; if $P$ is a once-punctured annulus, then as shown in Figure~\ref{Fcover}(b), $\phi^{-1}(P)$ is a pair of pants with one puncture at $\phi^{-1}(\Gamma)$. 
 As $S$ is incompressible in $(F\times I)-\hat{\mathcal{B}}$, after isotopy, we may assume that no curve of $S\cap\phi^{-1}(P)$ bounds a (punctured) disk in $\phi^{-1}(P)$.  So in either case, we may isotope $S$ so that each component of $S\cap\phi^{-1}(P)$ is parallel to a boundary curve of $\phi^{-1}(P)$.  Thus after isotopy, each component of $\phi(S)\cap P$ lies in a neighborhood of a curve $B_{i,j}$ in $\partial P$.

\begin{figure}
\begin{center}
\psfrag{a}{(a)}
\psfrag{b}{(b)}
\psfrag{p}{$\phi$}
\psfrag{i}{$\iota$}
\includegraphics[width=4in]{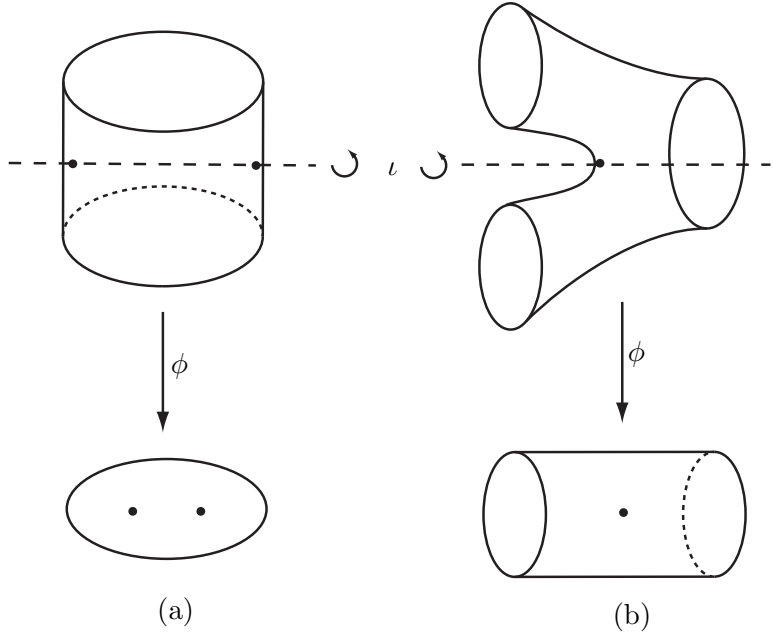}
\caption{Involution $\iota$ and double branched covering $\phi$ on pants}\label{Fcover}
\end{center}
\end{figure}

Notice that $\phi(S)$ has some nice properties. 
If there is an essential loop $c$ in $S$ such that $\phi(c)$ can be isotoped (in $M_C$) into a small neighborhood of a component of $\Gamma$, then $c$ bounds a disk in $F\times I$ possibly with a puncture at $\phi^{-1}(\Gamma)$.  Since $S$ is incompressible in $(F\times I)-\hat{\mathcal{B}}$, this means that $c$ cannot be essential in $S$, contradicting the assumption on $c$.  Thus, for any essential loop $c$ in $S$, $\phi(c)$ cannot be isotoped (in $M_C$) into a small neighborhood of a component of $\Gamma$.

In \cite{A}, Agol considers such a 2-complex formed by pants in $\Sigma\times S^1$, but our 2-complex $X$ lies in $\Sigma\times I$.  Nonetheless, after an isotopy pushing $\Sigma\times\partial I$ onto $X$, we may view $\Sigma\times\partial I$ as a union of some pants in $X$.  Note that if one glues $\Sigma\times\{0\}$ to $\Sigma\times\{1\}$, one obtains the same complex in $\Sigma\times S^1$ as the one in \cite{A}.  In particular, we may assume that the complement of $X$ in $\Sigma\times I$ consists of so-called $A$-regions in \cite{A}, see \cite[Figures 2 and 8]{A}.

Let $N(\mathcal{B})$ be a small neighborhood of $\mathcal{B}$.  For every pair of pants $P$ in the 2-complex $X$ above, since each component of $\phi(S)\cap P$ lies in a neighborhood of a curve $B_{i,j}$ in $\partial P$, we may assume that $\phi(S)\cap X\subset N(\mathcal{B})$.  Hence $\phi(S)-N(\mathcal{B})$ lies in the complement of $X$.  

In \cite{A}, Agol shows that if $\phi(S)$ is an embedded incompressible surface in $\Sigma\times I$, then each component of $\phi(S)-N(\mathcal{B})$ either is parallel to a pair of pants in $X$ or can be obtained by tubing together two pairs of pants in $X$ along some $B_{i,j}$.  This implies that $\phi(S)$ can be obtained by tubing together copies of these pants around the $B_{i,j}$'s.  The proof in \cite{A} is basically an argument of normal surfaces for the special cell-decomposition of $M_C$ determined by the pants in $X$, see \cite[Proof of Lemma 2.3]{A}.  It is shown in \cite{A} that a connected normal surface in each component of $M_C-X$ (i.e. an $A$-region in \cite{A}) is either a pair of pants parallel to a pair of pants in $X$ or obtained by tubing together two pairs of pants in $X$.  

Although the same normal surface argument should also work if $\phi(S)$ is not embedded, it is difficult to deal with immersed surfaces directly.  So we provide an alternative argument here.  Recall that the complement of $X$ in $\Sigma\times I$ consists of $A$-regions (see \cite[Figures 2 and 8]{A}).  Hence the closure of each component of $(\Sigma\times I)-X\cup N(\mathcal{B})$ can be viewed as a product $G\times J$ where $J=[a,b]$ is an interval, $\partial G\times J\subset\partial N(\mathcal{B})$, and each component of $G\times\partial J$ is obtained by tubing together two pairs of pants in $X$ using an annulus in $\partial N(\mathcal{B})$.  We use $G'\times J$ to denote $\phi^{-1}(G\times J)$.  So each component of $G'\times\partial J$ is obtained by tubing together the preimages of two pairs of pants in $X$.  Depending on the types of pants in $G\times\partial J$ (i.e., once-punctured annulus or twice-punctured disk), a component of $G'\times\partial J$ is either a one-hole torus or a 4-hole sphere or a two-hole torus with punctures at $\phi^{-1}(\Gamma)$, see Figure~\ref{Fcover} for pictures of the double branched covers of pants in $X$.  Since we are focused on the surface $S$, in the argument below, we ignore $\phi^{-1}(\Gamma)$ in $G'\times J$.

Let $G'\times J$ be as above and let $P'$ be a component of $G'\times\partial J$. 
Since $\phi(S)\cap X\subset N(\mathcal{B})$, $S\cap P'$  either is empty or consists of parallel copies of the curves which divide $P'$ into two subsurfaces that are the preimages of the two pairs of pants in $X$ used to form the component $\phi(P')$ of $G\times\partial J$.  Next we consider the intersection of $S$ with the level surfaces $G'\times\{t\}$, and the argument is similar to those in \cite{FH, HT}.

\vspace{10pt}
\noindent
\emph{Case (a)}.  $G'\times\{a\}$ is not a (punctured) two-hole torus.
\smallskip

After pushing curves on $S$ into $N(\hat{\mathcal{B}})$, we may assume that, for each component $Q$ of $S\cap (G'\times J)$, if $Q$ contains a curve parallel to a curve in $\hat{\mathcal{B}}$, then this curve is peripheral in $Q$.  We claim that there is a level surface $G'\times\{s\}$ that can be isotoped disjoint from $S$.  Suppose the claim is false. 
  Denote the two components of $G'\times\partial J$ by $P'$ and $P''$, so $S\cap P'\ne\emptyset$ and $S\cap P''\ne\emptyset$.  By the configuration of the $A$-regions (see \cite[Figures 2 and 8]{A}) and by the conclusion on $S\cap P'$ before Case (a), the curves in $S\cap P'$ are not parallel to the curves in $S\cap P''$.  We may assume that the height function $h:G'\times J\to J$ is a Morse function on $S\cap(G'\times J)$.  When the level surface $G'\times\{t\}$ moves from top to bottom, its intersection with $S$ changes to different curves only after passing a saddle tangency and the curve change can be viewed as adding a band, see \cite[Figure 8]{FH} for a picture. 
Since we are in the case that $P'$ is not a (punctured) two-hole torus, $P'$ is either a one-hole torus or a 4-hole sphere.  Similar to the arguments in \cite{FH, HT}, in the cases of one-hole torus and 4-hole sphere, adding a band to $S\cap P'$ either (1) produces a trivial curve bounding a disk in $F\times I-N(\hat{\mathcal{B}})$ or (2) creates a peripheral curve in $P'$.  This implies that $S\cap(G'\times\{t\})$ cannot change to a different type of essential curves unless, at some level $s\in J$, $S\cap(G'\times\{s\})$ consists of trivial and peripheral curves in $G'\times\{s\}$.  Recall that $S\cap P'\ne\emptyset$, $S\cap P''\ne\emptyset$ and curves in $S\cap P'$ and $S\cap P''$ are not parallel, so there must exist such a level $G'\times\{s\}$.  Moreover, by our assumption on $S\cap (G'\times J)$ above, if a curve in $S\cap(G'\times\{s\})$ is peripheral in $G'\times\{s\}$, then it is also peripheral in $S\cap (G'\times J)$.  Since $S$ is incompressible, this means that $S$ can be isotoped disjoint from this level $G'\times\{s\}$.
 
Thus, after isotopy, there is a level surface $G'\times\{s\}$ disjoint from $S$.  By a theorem of Waldhausen \cite[Proposition 3.1 and Corollary 3.2]{W}, this implies that each component of $S\cap (G'\times J)$ is parallel to a subsurface of $G'\times\partial J$.  Therefore, similar to Agol's conclusion in the case that $\phi(S)$ is an embedded surface, each component of $\phi(S)-X$ is isotopic to either a branched cover of a pair of pants in $X$ or a surface obtained by tubing together branched covers of two pairs of pants in $X$ around some $B_{i,j}$.  

\vspace{10pt}
\noindent
\emph{Case (b)}.  $G'\times\{a\}$ is a (punctured) two-hole torus.
\smallskip

This case is similar, though not as straightforward as Case (a).  Let $P'$ and $P''$ be the two components of $G'\times\partial J$.  As shown in Figure~\ref{Fcurve}(a), there are two simple closed curves $\alpha'$ and $\beta'$ in $P'$ that decompose $P'$ into two pairs of pants, each of which is the preimage of a once-punctured annulus in $X$.  Similarly, as shown in Figure~\ref{Fcurve}(b), there are also such a pair of curves $\alpha''$ and $\beta''$ in $P''$.  Moreover, by the configuration of an $A$-region, $\pi(\alpha')\cap\pi(\alpha'')$ and $\pi(\alpha')\cap\pi(\beta'')$ are single points, where $\pi\colon G'\times J\to G'$ is the projection.  

\begin{figure}
\begin{center}
\psfrag{a}{(a)}
\psfrag{b}{(b)}
\psfrag{c}{(c)}
\psfrag{x}{$\alpha'$}
\psfrag{y}{$\beta'$}
\psfrag{t}{$\beta''$}
\psfrag{p}{$\alpha''$}
\includegraphics[width=4.5in]{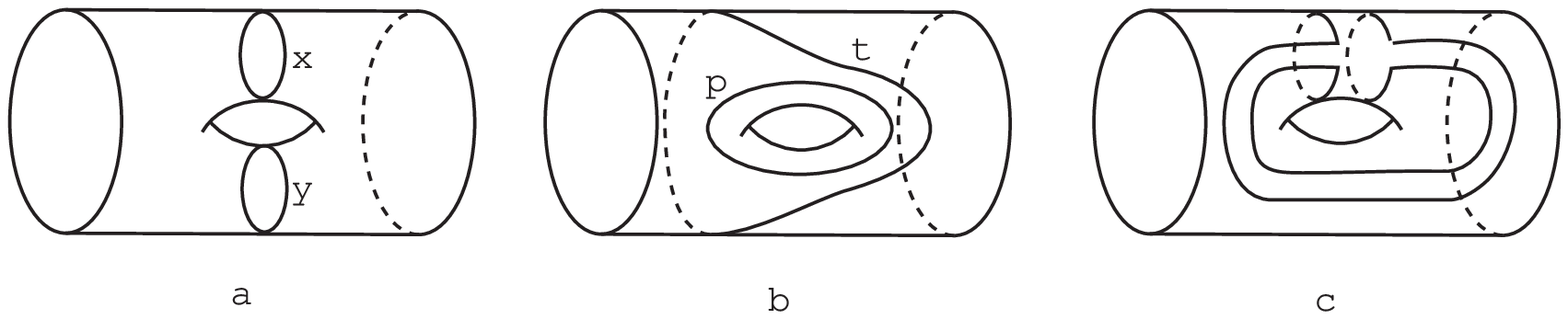}
\caption{}\label{Fcurve}
\end{center}
\end{figure}

Similar to Case (a), after pushing curves in $S$ into $N(\hat{\mathcal{B}})$, we may assume that $S\cap(G'\times J)$ does not contain any non-peripheral curve that is parallel to a curve in $\partial G'\times\{t\}$. 
Suppose $S\cap(G'\times J)$ has a component $Q$ that is not parallel to a subsurface of $G'\times\partial J$.  
So $Q\cap P'$ and $Q\cap P''$ consist of curves parallel to the curves in $\alpha'\cup\beta'$ and $\alpha''\cup\beta''$ respectively.   If $Q\cap P'$ contains curves parallel to both $\alpha'$ and $\beta'$ in $P'$, then similar to the argument in Case (a) and those in \cite{FH, HT}, adding a band to curves in $Q\cap P'$ creates a trivial or peripheral curve in $P'$, and since $Q$ is incompressible, this implies that $Q$ is parallel to a subsurface of $P'$, contradicting our assumption on $Q$.  Thus we may assume that $Q\cap P'$ consists of curves all parallel to $\alpha'$, and similarly, $Q\cap P''$ consists of curves all parallel to $\alpha''$.  Let  $N(\pi(\alpha')\cup\pi(\alpha''))$ be the closure of a small neighborhood of  $\pi(\alpha')\cup\pi(\alpha'')$ in $G'$, where $\pi\colon G'\times J\to G'$ is the projection.  As $\pi(\alpha')\cap\pi(\alpha'')$ is a single point,  $N(\pi(\alpha')\cup\pi(\alpha''))$ is a one-hole torus.  Let $C=\partial  N(\pi(\alpha')\cup\pi(\alpha''))$ be its boundary curve, see Figure~\ref{Fcurve}(c) for a picture of $C$.  So $C$ divides $G'$ into a one-hole torus, which we denote by $T$, and a pair of pants, denoted by $R$.  By our assumption on $Q\cap P'$ and $Q\cap P''$ above, we may assume that $Q\cap (C\times J)$ consists of essential simple closed curves in the vertical annulus $C\times J$.  Hence, after isotopy, $Q\cap (R\times J)$ consists of pairs of pants of the form $R\times\{x\}$.  Now we apply the arguments in Case (a) and \cite{FH} on $T\times J$, and we can conclude that, after pushing curves in $Q$ onto $C\times J$ (via annulus compression), each component of $Q\cap(T\times J)$ becomes a surface parallel to a subsurface of $T\times\partial J$.  Since we have assumed that $Q$ is not parallel to a subsurface of $G'\times\partial J$, by our assumption on $Q\cap(R\times J)$, this implies that $Q$ must be obtained by tubing together two $\partial$-parallel pairs of pants in $T\times J$ along $C\times J$.  More precisely, $Q=Q'\cup Q''\subset T\times J$ where (1) $Q'$ is a pair of pants with two boundary circles in $P'$ parallel to $\alpha'$, (2) $Q''$ a pair of pants with two boundary circles in $P''$ parallel to $\alpha''$, and (3) their common boundary circle $C_Q=\partial Q'\cap\partial Q''$ is parallel to $C\times\{t\}$.  Next we rule out this configuration of $Q$.

Since $Q$ is incompressible, $Q$ must be $\pi_1$-injective in $T\times J$.  Note that $\pi_1(T\times J)$ is a free group generated two elements $a$ and $b$, where $a$ and $b$ are represented by $\alpha'$ and $\alpha''$ respectively.  Let $c$ be the element represented by $C_Q$.  So we may view $\pi_1(Q')$ as a free group generated by $a$ and $c$, and view $\pi_1(Q'')$ as a free group generated by $b$ and $c$.  As $Q$ is obtained by gluing $Q'$ and $Q''$ together along $C_Q$, $\pi_1(Q)$ is a rank-3 free group generated by $a$, $b$, and $c$.  However, since $c=aba^{-1}b^{-1}$ in $\pi_1(T\times J)$, the map $i_*\colon\pi_1(Q)\to \pi_1(T\times J)$ (which is induced by the inclusion) cannot be injective, contradicting that $Q$ is incompressible.

Therefore, each component of $S\cap(G'\times J)$ must be parallel to a subsurface of $G'\times\partial J$. So in both Case (a) and Case (b), each component of $\phi(S)-X$ is isotopic to either a branched cover of a pair of pants in $X$ or a surface obtained by tubing together branched covers of two pairs of pants in $X$ around some $B_{i,j}$.   This means that the whole surface $\phi(S)$ can be obtained by tubing together branched covers of these pants around the curves $B_{i,j}$ in $\mathcal{B}$.
 
Let $N$ be the manifold obtained from $F\times I$ by performing a $\frac{1}{n_i}$-Dehn surgery on each curve $l_i$ in $\hat{\mathcal{B}}$.  As $S$ is disjoint from $\hat{\mathcal{B}}$, we may view $S\subset N$.  Next we suppose $S$ is incompressible in $N$ and consider the isotopy of $S$ in $N$.  Similar to \cite{A}, by $\frac{1}{n_i}$-Dehn surgery on these curves, $S$ can be isotoped across curves in $\hat{\mathcal{B}}$ without affecting the isotopy class of $S$ in $N$.  So, as in \cite{A}, $\phi(S)$ can be isotoped across the curves $B_{i,j}$ in $\mathcal{B}$ without affecting the isotopy class of $S$ in $N$.  Thus, to build $\phi(S)$ using pants in $X$, it makes no difference which ways to tube together (the branched covers of) these pants around the $B_{i,j}$'s.

In \cite{A}, it is also shown that, if $\phi(S)$ is an embedded surface, no component of $\phi(S)$ can be obtained by tubing together a collection of once-punctured annuli, in other words, at least one piece must be a twice-punctured disk.  This is because of the particular choice of the path $C$ in $\mathcal{P}(\Sigma)^{(1)}$ of the pants decompositions.  If $\phi(S)$ is obtained by tubing together only once-punctured annuli, then after isotoping $\phi(S)$ across curves in $\mathcal{B}$, $\phi(S)$ has two once-punctured annuli tubed together as in \cite[Figure 12]{A}, which gives a loop in $\phi(S)$ bounding a once-punctured disk, contradicting the property of $\phi(S)$ that we mentioned at the beginning.  The same argument also works for the case that $\phi(S)$ is not embedded.  If $\phi(S)$ is obtained by tubing together only branched covers of once-punctured annuli, then after isotoping $\phi(S)$ across curves in $\mathcal{B}$, there are two once-punctured annuli as in \cite[Figure~12]{A}, such that the branched covers of the two once-punctured annuli are tubed together.  The loop bounding the once-punctured disk in \cite[Figure~12]{A} gives rise to an essential loop in $\phi(S)$ winding around the meridian of a component of $\Gamma$, contradicting the properties of $\phi(S)$ that we mentioned at the beginning.  We would like to remark that the argument in Case (b) above also implies this fact, because the double branched cover of the region in \cite[Figure 12]{A} is of the form $T\times J$ where $T$ is a one-hole torus and the argument in Case (b) says that the surface obtained by such tubing must be compressible in $T\times J$.

Therefore, when we tube together the branched covers of these pants together to obtain $\phi(S)$, at least one pair of these pants must be a twice-punctured disk.  For Euler characteristic reason, this means that each component of $\phi(S)$ must be a branched cover of either a punctured disk or a punctured sphere with branch points at $\Gamma$.  In particular, after isotopy, there is a level surface $\Sigma\times\{t\}$ disjoint from $\phi(S)$.

\section{Fat train tracks and distance of Heegaard splittings}\label{SLM}

In this section, we review some results of Lustig and Moriah.  In \cite{LM},  Lustig and Moriah defined a fat train track which is similar to the fibered neighborhood of a traditional train track.  We first briefly describe the fat train track used in \cite{LM} and refer the reader to \cite[sections 2 and 3]{LM} for more details.  

A \emph{train track} $\tau$ in \cite{LM} is defined as a closed subsurface of $F$ with a singular $I$-fibration as follows: the interior of $\tau$ is fibered by open intervals and the fibration extends to a fibration of $\tau$ by properly embedded closed intervals, except for finitely many singular (or cusp) points on $\partial\tau$, where precisely two $I$-fibers meet.  Such fibers are called \emph{singular fibers}.  Note that both endpoints of an $I$-fiber may be singular/cusp points.  Two singular $I$-fibers are adjacent if they share a singular point as a common endpoint.  A maximal connected union of singular $I$-fibers is called an \emph{exceptional fiber}.  In \cite{LM}, an exceptional fiber is allowed to be a circle.  In fact, a train track $\tau$ is called a \emph{fat train track} if all of its exceptional fibers are cyclic.  The collection of all the exceptional fibers of a fat train track $\tau$ is denoted by $\mathcal{E}_\tau$.  A train track $\tau$ is \emph{maximal} if its complement is a collection of triangles.

An arc or a closed curve $\alpha$ in $\tau$ is \emph{carried} by $\tau$ if it is transverse to the $I$-fibers of $\tau$, and $\alpha$ \emph{covers} $\tau$ if (1) $\alpha$ is carried by $\tau$ and (2) $\alpha$ meets every $I$-fiber of $\tau$.  Note that this is the same as saying that $\alpha$ is fully carried by $\tau$ in some other papers. 

Similar to the usual notion of train tracks, one can split a fat train track.  Note that the train track after splitting may not be fat.  The type of splittings that we are interested in are splittings without changing the complement of $\tau$ (this is called splitting without collision in \cite{LM}).  Let $\alpha$ be an arc carried by $\tau$ and with precisely one endpoint at a singular/cusp point.  Then one can split (or unzip) $\tau$ along $\alpha$ and get a new train track $\tau'$.  Since only one endpoint of $\alpha$ is at the cusp, $F-\tau$ is isotopic to $F-\tau'$ in $F$.  In particular, if $\tau$ is maximal, so is $\tau'$.  We say $\tau'$ is \emph{derived from} $\tau$ if $\alpha$ covers $\tau$.  Note that we may view $\tau\supset\tau'$.

Given a closed surface $F$, a complete decomposing system $\mathcal{D}$ of $F$ is a collection of simple closed curves in $F$ that decompose $F$ into  pairs of pants.  A fat train track $\tau$ in $F$ is called \emph{complete} if the following conditions are satisfied.
\begin{enumerate}
  \item The collection $\mathcal{E}_\tau$ of exceptional fibers of $\tau$ is a complete decomposing system of $F$.
  \item Each pair of pants $P$ complementary to the system $\mathcal{E}_\tau$ contains two triangles as complementary components of $\tau$ in $P$.
  \item The train track $\tau$ only carries seams, but no waves, with respect to $\mathcal{E}_\tau$ (recall that a seam in a pair of pants is a properly embedded arc with endpoints in different boundary circles and a wave is a properly embedded arc with both endpoints in the same boundary circle.)
\end{enumerate}

Let $H$ be a handlebody. In \cite{LM}, the authors consider complete decomposing systems of $\partial H$ which bound disk systems in $H$, and denote the set of such isotopy classes by $\mathcal{CDS}(H)$. Using the work of Masur and Minsky in \cite{MM}, Lustig and Moriah proved the following.

\begin{theorem}[Theorem 4.7 in \cite{LM}]\label{TLM}
Let $M$ be an orientable 3-manifold with a Heegaard splitting $M=V\cup_F W$.  Consider complete decomposing systems $\mathcal{D}\in\mathcal{CDS}(V)$ and $\mathcal{E}\in\mathcal{CDS}(W)$ which do not have waves with respect to each other.  Let $\tau\subset F$ be a complete fat train track with exceptional fibers $\mathcal{E}_\tau=\mathcal{E}$, and assume that $\mathcal{D}$ is carried by $\tau_n$ for some $n$-tower of derived train tracks $\tau=\tau_0\supset\tau_1\supset\cdots\supset \tau_n$ with $n\ge 2$.  Then the distance of the given Heegaard splitting satisfies $d(V,W)\ge n$.
\end{theorem}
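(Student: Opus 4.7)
The plan is to use the Masur--Minsky machinery \cite{MM} relating splitting sequences of train tracks to distance in the curve complex, combined with the special structure of derivations of a complete fat train track. The guiding principle is that a complete fat train track $\tau$ with $\mathcal{E}_\tau=\mathcal{E}$ determines a subset of $\mathcal{C}(F)$ of uniformly bounded diameter containing $\mathcal{E}$, and each derivation $\tau_i\supset\tau_{i+1}$ should contribute at least one additional unit of distance to the ``carried'' subset measured from $\mathcal{E}$ in $\mathcal{C}(F)$.

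I would first prove by induction on $k$ the claim that if $\gamma$ is a simple closed curve carried by $\tau_k$ without a wave with respect to $\mathcal{E}$, then $d_{\mathcal{C}(F)}(\gamma,\mathcal{E})\ge k$. Applying this at $k=n$ to the curves of $\mathcal{D}$, which are carried by $\tau_n$ by hypothesis and have no wave with respect to $\mathcal{E}$ by the no-waves hypothesis between $\mathcal{D}$ and $\mathcal{E}$, yields $d(\mathcal{D},\mathcal{E})\ge n$, and therefore $d(V,W)\ge n$ since $\mathcal{D}\in\mathcal{CDS}(V)$ and $\mathcal{E}\in\mathcal{CDS}(W)$ consist of boundaries of compressing disks. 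The inductive step would be phrased as the following obstruction: if $\gamma'$ is carried by $\tau_{k+1}$ (no wave) and $\delta$ is disjoint from $\gamma'$, then $\delta$ cannot be carried by $\tau_{k-1}$ without a wave. The point is that the derivation $\tau_k\supset\tau_{k+1}$ was performed by unzipping along an arc $\alpha_{k+1}$ that covers $\tau_k$, so $\alpha_{k+1}$ crosses every $I$-fiber of $\tau_k$; any $\delta$ carried by $\tau_{k-1}$ and disjoint from $\gamma'$ must therefore miss the support of the unzipping, which under the completeness conditions on the fat train track (two triangles in each pants and exceptional fibers forming a pants decomposition of $F$) confines $\delta$ to be peripheral in $\mathcal{E}$ or to develop a wave.

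The hard part will be making this avoidance argument quantitatively precise. The subtlety is that being disjoint from $\gamma'$ is only a single-edge condition in $\mathcal{C}(F)$, whereas being carried by $\tau_{k-1}$ is a full linear system of conditions on branch weights; one must convert the covering property of each $\alpha_j$ into a rigid combinatorial obstruction on these weights. This is precisely the role of the Masur--Minsky subsurface projection estimate: one pulls back the carrying data of $\tau_{k-1}$ to the subsurface that records the derivation at step $k$ and exploits the uniformly bounded diameter in the curve complex of that subsurface of the image of all curves carried by $\tau_{k-1}$. The no-wave hypotheses ensure that $\mathcal{D}$ and $\mathcal{E}$ are taut representatives of their carrying classes, a condition without which the diameter bound could degrade through peripheral or boundary-parallel errors and the inductive step would only yield a weaker ``$d\ge n-C$'' type estimate in place of the sharp bound stated in the theorem.
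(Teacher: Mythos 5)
First, note that the paper you are reading does not prove this statement: it is quoted verbatim from Lustig and Moriah (Theorem 4.7 in \cite{LM}) and used as a black box. So there is no proof here to compare against; your proposal has to be assessed against the argument in \cite{LM}, which rests on the Masur--Minsky nesting properties of derived train tracks (Section 4 of \cite{MM}).

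Your overall plan is the right one: the sets of curves carried by the derived tower are nested, and each derivation step should cost at least one unit of distance in $\mathcal{C}(F)$ relative to $\mathcal{E}$. However, the inductive step you propose is stated in the wrong direction, and as written it does not support the induction. You claim: ``if $\gamma'$ is carried by $\tau_{k+1}$ (no wave) and $\delta$ is disjoint from $\gamma'$, then $\delta$ \emph{cannot} be carried by $\tau_{k-1}$ without a wave.'' If $\delta$ avoids $\tau_{k-1}$, you have no way to apply the inductive hypothesis to $\delta$, so this gives no lower bound on $d(\delta,\mathcal{E})$ and hence none on $d(\gamma',\mathcal{E})$. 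The nesting lemma one actually needs is the \emph{positive} statement (this is Masur--Minsky's nesting result for full splitting sequences, adapted by Lustig--Moriah to the fat/complete setting): if $\gamma'$ is carried by a diagonal extension of a train track derived from $\tau_k$ and $\delta$ is disjoint from $\gamma'$, then $\delta$ \emph{is} carried by some diagonal extension of $\tau_k$. Iterating this along a geodesic from a curve of $\mathcal{D}$ (carried by $\tau_n$) to a curve of $\mathcal{E}$ shows that each edge of the geodesic can move outward at most one level of the tower; since the curves of $\mathcal{E}$ are exceptional fibers and, under the no-wave hypothesis, are not carried by any diagonal extension of $\tau_0$, the geodesic must have length at least $n$.

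Two further points. First, your final paragraph reaches for subsurface projection estimates, but that is a different and heavier tool than what this theorem uses; the argument in \cite{LM} runs entirely through the nesting lemma, and the subsurface projection machinery does not obviously repair the reversed implication in your inductive step, since the issue there is a direction of carrying, not a diameter bound. Second, your account of the role of the no-wave hypothesis (``taut representatives'') is too vague: its actual function is to guarantee that the curves of $\mathcal{E}$ do not sneak into the carried sets via wave arcs (so that $\mathcal{E}$ genuinely lies outside every $PE(\tau_i)$), and symmetrically for $\mathcal{D}$; this is what anchors the two ends of the geodesic at the two extremes of the nested tower.
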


Note that, by \cite[Lemma 2.5]{LM}, if one curve in $\mathcal{D}$ covers $\tau_n$ in Theorem~\ref{TLM}, then every curve in $\mathcal{D}$ is carried by $\tau_n$, and by \cite[Lemma 2.9]{LM}, if a curve is carried by $\tau_n$, then it covers $\tau_{n-1}$. 
We would also like to quote some results in \cite{LM} which are useful for our purposes.  

\begin{lemma}[Lemma 3.9 in \cite{LM}]\label{LML}
Let $\tau$ be a complete fat train track on a surface $F$, and let $\tau'$ be a train track derived from $\tau$. Let $\beta$ be an arc with endpoints on $\mathcal{E}_\tau$ which covers $\tau'$. Let $D$ be an essential simple closed curve which is tight with respect to $\mathcal{E}_\tau$ and contains $\beta$ as a subarc. Then $D$ can be carried by $\tau'$, and in fact covers $\tau'$.
\end{lemma}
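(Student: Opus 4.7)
The proof proceeds pants-by-pants in the decomposition given by $\mathcal{E}_\tau$. Write $D = \beta \cup \gamma$, where $\gamma$ is the complementary arc of $\beta$ in the simple closed curve $D$. Since $\beta$ is already carried by $\tau'$, it suffices to show that each arc of $\gamma \cap P$, for every pants $P$ in the complement of $\mathcal{E}_\tau$, is parallel rel $\mathcal{E}_\tau$ to some arc of $\beta \cap P$. The covering conclusion is then automatic: $\beta \subset D$ already meets every $I$-fiber of $\tau'$, so $D$ does too.

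Fix such a pants $P$. Because $\tau$ is a complete fat train track, $\tau \cap P$ is the fibered neighborhood of three seams joining the cuffs of $P$ pairwise, with two triangular complementary regions. Since $\tau'$ is derived from $\tau$ by unzipping along an arc that covers $\tau$ without collision, the complementary regions of $\tau' \cap P$ in $P$ are again two triangles up to isotopy. Consequently $\tau' \cap P$ carries exactly the same essential arc classes as $\tau \cap P$: the three seams of $P$, and no wave. Because $\beta$ covers $\tau'$, the arcs of $\beta \cap P$ collectively meet every $I$-fiber of $\tau' \cap P$, and in particular each of the three seam classes of $P$ is represented by an arc of $\beta \cap P$.

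Now let $\delta$ be an arc of $\gamma \cap P$. Tightness of $D$ with respect to $\mathcal{E}_\tau$ ensures $\delta$ is essential in $P$, so $\delta$ is a seam or a wave. Simplicity of $D$ forces $\delta$ to be disjoint from every arc of $\beta \cap P$. If $\delta$ were a wave on a cuff $c$ of $P$, then $\delta$ would separate the other two cuffs of $P$ and hence have positive geometric intersection number with the seam joining those two cuffs; since $\beta \cap P$ contains a representative of that seam, $\delta$ could not be disjoint from $\beta \cap P$, a contradiction. Thus $\delta$ is a seam, and disjointness from the parallel representative in $\beta \cap P$ forces $\delta$ to be parallel to it, with matching entry/exit sides at the cuffs. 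Hence $\delta$ is carried by $\tau' \cap P$. Assembling over all pants, $D$ is carried by $\tau'$, and since $\beta \subset D$ covers $\tau'$, so does $D$.

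The main obstacle is the wave-exclusion argument in the third paragraph. It relies on two inputs: the no-wave clause in the definition of a complete fat train track, which is preserved by splitting without collision; and the hypothesis that $\beta$ \emph{covers} $\tau'$ (rather than merely being carried by it), which is what guarantees that every seam class of each pants is represented in $\beta \cap P$. Once waves are excluded, tightness and simplicity of $D$ deliver the needed parallelism and the lemma follows.
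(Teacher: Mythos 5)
This lemma is quoted from Lustig--Moriah (their Lemma~3.9), and the paper you are reading supplies no proof of it; it is cited as a black box. So there is no ``paper's proof'' to compare against --- I will assess your argument on its own terms.

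The overall strategy --- analyze $D$ pants-by-pants against $\mathcal{E}_\tau$, exclude waves using simplicity of $D$ together with the fact that $\beta$ covers $\tau'$, and conclude parallelism --- is the right shape. The ``covers'' part being automatic once ``carried'' is established is also correct. But there is a concrete false step in the second paragraph: you assert that the complementary regions of $\tau'\cap P$ in $P$ are ``again two triangles up to isotopy'' because $\tau'$ is derived from $\tau$ without collision. That is not what ``without collision'' gives you. The splitting arc $\alpha$ \emph{covers} $\tau$, so it passes through every pants and crosses $\mathcal{E}_\tau$ many times; unzipping along it adds to the complement a long alley $N(\alpha)$ that threads through many pants. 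The statement ``$F-\tau'$ is isotopic to $F-\tau$'' is about the complement in $F$, and the isotopy does not respect the pants decomposition: after the split, the ``long triangle'' absorbing the alley crosses cuffs of $\mathcal{E}_\tau$, so $(F-\tau')\cap P$ is two triangles \emph{plus several thin rectangles} coming from $N(\alpha)\cap P$, not two triangles. Consequently $\tau'\cap P$ is not a train track in $P$ with the same pants-level structure as $\tau\cap P$, and your justification that $\tau'\cap P$ ``carries exactly the same essential arc classes'' and that each seam class is represented in $\beta\cap P$ does not follow from what you wrote.

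A second gap is in the last step of the third paragraph: from ``$\delta$ is a seam disjoint from, and parallel to, some arc of $\beta\cap P$'' you conclude ``hence $\delta$ is carried by $\tau'\cap P$.'' Parallelism in $P$ rel $\partial P$ does not by itself place $\delta$ inside the fibered neighborhood $\tau'$ (the parallelism rectangle may run through a complementary region of $\tau'$), and even if each $\delta$ can be individually isotoped into $\tau'$, you still need these local isotopies to be compatible across the cuffs so that all of $D$ is simultaneously carried. The lemma is precisely about this global compatibility --- it is where the hypotheses ``$D$ simple, tight w.r.t.\ $\mathcal{E}_\tau$, and $\beta\subset D$ covers $\tau'$'' have to interact with the branching of $\tau'$ (not just with the pants of $\mathcal{E}_\tau$), and your write-up skips it. To repair the argument you would need to work with the actual branch structure of $\tau'$ (e.g.\ track how the split along $\alpha$ re-routes seams at the switches) rather than only with the pants decomposition of $\mathcal{E}_\tau$.
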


By applying Lemma~\ref{LML} to Dehn twists, Lustig and Moriah \cite{LM} proved the following.  Lemma~\ref{LMP} is a weaker version of \cite[Proposition 3.12]{LM}.  

\begin{lemma}[Proposition 3.12 of \cite{LM}]\label{LMP}
Let $\mathcal{E}$ be a complete decomposing system on $F$ and let $\tau'$ be a maximal train track that is derived from some complete fat train track $\tau$ with exceptional fibers $\mathcal{E}_\tau = \mathcal{E}$.  Let $k$ be an essential simple closed curve that covers $\tau'$.  Let $\mathcal{D}$  be either an essential simple closed curve that non-trivially intersects $k$ or a complete decomposing system on $F$.
Let $\delta_k^m\colon F\to F$ be the $m$-fold Dehn twist along $k$. 
 Then there exists an integer $m_0$ such that for every $m \in\mathbb{Z}\setminus\{m_0, m_0+1, m_0+2, m_0 +3\}$, $\delta_k^m(\mathcal{D})$ covers $\tau'$.
\end{lemma}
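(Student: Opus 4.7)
The plan is to construct, for all but finitely many $m$, an arc $\beta \subset \delta_k^m(\mathcal{D})$ with endpoints on $\mathcal{E}_\tau$ that covers $\tau'$, and then invoke Lemma~\ref{LML}. First I would reduce to the case that $\mathcal{D}$ is a single essential simple closed curve intersecting $k$ essentially. In the complete-decomposing-system case, at least one component of $\mathcal{D}$ must intersect $k$ essentially: otherwise $k$ would be isotopic into the complement of $\mathcal{D}$, hence parallel to a curve in $\mathcal{D}$, which is impossible because $k$ covers the \emph{maximal} derived train track $\tau'$ and therefore must essentially cross every pair of pants cut off by $\mathcal{E}$. By the remark following Theorem~\ref{TLM}, it suffices to show that a single component of $\delta_k^m(\mathcal{D})$ covers $\tau'$, so the two cases merge.

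Next, choose a closed annular neighborhood $A$ of $k$ transverse to $\mathcal{E}_\tau$ and to $\mathcal{D}$. Then $\mathcal{D}\cap A$ is a union of $p=|\mathcal{D}\cap k|\ge 1$ essential spanning arcs of $A$, each crossing $k$ exactly once. Applying $\delta_k^m$ replaces each such arc by an arc that wraps around the core of $A$ with signed winding number approximately $m$. For $|m|$ large, at least one of these twisted arcs contains a subarc $\beta$ whose interior makes a full loop around the core of $A$ and whose endpoints lie on two consecutive arcs of $A\cap\mathcal{E}_\tau$ (in particular on $\mathcal{E}_\tau$). Since the core of $A$ is isotopic to $k$ and $k$ covers $\tau'$, such a $\beta$ covers $\tau'$.

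To apply Lemma~\ref{LML} I then need $\delta_k^m(\mathcal{D})$ to be tight with respect to $\mathcal{E}_\tau$. For $|m|$ large enough, a standard bigon-removal argument shows that any tightening isotopy affects only the endpoint segments of $\beta$ and cannot destroy the full-loop subarc, because each bigon between $\delta_k^m(\mathcal{D})$ and $\mathcal{E}_\tau$ is confined to a bounded piece of $A$ whose combinatorial size is independent of $m$. After tightening, a (possibly slightly shortened) subarc $\beta' \subset \delta_k^m(\mathcal{D})$ with endpoints on $\mathcal{E}_\tau$ still covers $\tau'$; then Lemma~\ref{LML} yields that $\delta_k^m(\mathcal{D})$ covers $\tau'$.

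The main obstacle, and the real content of the lemma, is pinning down the exceptional set $\{m_0,m_0+1,m_0+2,m_0+3\}$. The shift $m_0$ encodes the ``signed'' initial configuration of $\mathcal{D}\cap A$: the actual total winding of a twisted arc is $m$ plus a bounded correction coming from how $\mathcal{D}$ sat inside $A$ before the twist. In a narrow transitional range where positive and negative windings cancel, no resulting arc wraps the core of $A$ even once in a well-defined direction, so $\beta$ cannot be produced. A direct analysis of the $p$ arcs of $\mathcal{D}\cap A$ together with the triangular-complement condition on $\tau$ shows this bad range has length at most $4$; this combinatorial bookkeeping is exactly where the proof of \cite[Proposition 3.12]{LM} concentrates its effort.
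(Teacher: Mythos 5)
The paper gives no proof of this lemma: it is quoted as Proposition~3.12 of Lustig--Moriah, and the only original content here is the remark noting that one hypothesis there (that $k$ has no wave with respect to $\mathcal{D}$) can be dropped for the paper's purposes. What the paper does say is that Lustig and Moriah prove it ``by applying Lemma~\ref{LML} to Dehn twists,'' and your outline matches that description: twist in an annular neighborhood $A$ of $k$, extract a subarc $\beta\subset\delta_k^m(\mathcal{D})$ with endpoints on $\mathcal{E}_\tau$ that wraps $A$ and hence covers $\tau'$, then invoke Lemma~\ref{LML}. So the architecture is right.

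The genuine gap is the one you flag yourself: the claim that the exceptional set has size at most four is asserted, not derived, and that bound is exactly where the content lies. It also matters for this paper's application. In Section~\ref{SP} one first fixes five candidate integers $m_1,\dots,m_5$ via Hatcher's theorem, then fixes $n_1$, and only afterwards learns which twist powers are bad for $\delta_p^m(h(\mathcal{E}))$; the conclusion that at least one $m_j$ survives uses the a~priori count of at most four bad values, and your ``all but finitely many'' does not supply it (the consecutivity is never used, but the count is). Two lesser points deserve tightening as well. The bigon-removal step needs an actual argument that tightening $\delta_k^m(\mathcal{D})$ against $\mathcal{E}_\tau$ cannot destroy the wrapping subarc, e.g.\ by showing bigons are trapped on one side of $A$ with combinatorial size independent of $m$. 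And the reduction of the system case to a single curve, via ``one component covers implies every component is carried,'' should spell out why the whole system then \emph{covers} $\tau'$: the covering component already meets every $I$-fiber, so adding carried components preserves condition~(2) of the definition.
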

\begin{remark}
Lemma~\ref{LMP} is basically half of  \cite[Proposition 3.12]{LM}.  In this lemma, since both $k$ and $\delta_k^m(\mathcal{D})$ cover $\tau'$, $k$ and $\delta_k^m(\mathcal{D})$ have no wave with respect to $\mathcal{E}$.  In the case that $\mathcal{D}$ is a complete decomposing system, since we do not need the result that $\mathcal{E}$ has no wave with respect to $\delta_k^m(\mathcal{D})$, we do not need the hypothesis in \cite[Proposition 3.12]{LM} that $k$ has no wave with respect to $\mathcal{D}$, see part (I) of the proof of \cite[Proposition 3.12]{LM}.
\end{remark}

\section{Proof of the main theorem}\label{SP}

The first step of our construction is similar to that in \cite[section 5.3]{LM}. 
We start with a standard genus $g$ Heegaard splitting $S^3=V\cup_F W$ of $S^3$.  Let $\mathcal{D}\in\mathcal{CDS}(V)$ and $\mathcal{E}\in\mathcal{CDS}(W)$ be complete decomposing systems.  In \cite[section 5.3]{LM}, it is shown that $F$ contains a minimal-maximal lamination $\mathcal{L}$ (which means that the complement of $\mathcal{L}$ consists of ideal triangles and each leaf is dense in $\mathcal{L}$) such that the leaves of $\mathcal{L}$ are tight with respect to both $\mathcal{D}$ and $\mathcal{E}$, and $\mathcal{L}$ has no wave with respect to both $\mathcal{D}$ and $\mathcal{E}$.  
As in  \cite[section 5.3]{LM}, $\mathcal{L}$ is carried by maximal fat train tracks $\tau_\mathcal{D}$ and $\tau_\mathcal{E}$ with $\mathcal{D}$ and $\mathcal{E}$ respectively as exceptional fibers.

We may split $\tau_\mathcal{E}$ along $\mathcal{L}$ and get an $n$-tower of derived train tracks $\tau_\mathcal{E}=\tau_0\supset\tau_1\supset\cdots\supset \tau_n$ ($n\ge 2$) with $\tau_n$ carrying $\mathcal{L}$.  Let $k$ be a simple closed curve close to $\mathcal{L}$ in the projective measured lamination space, and in particular, we require that $k$ covers $\tau_n$.  Let $\delta_k^m\colon  F\to F$ be the $m$-fold Dehn twist along $k$.

Now we consider the collection of loops $\hat{\mathcal{B}}$ in $F\times I$ constructed in section~\ref{SA}.  Let $\pi\colon F\times I\to F$ be the projection and we consider curves $\pi(\hat{\mathcal{B}})$ in $F$.  We may choose $k$ to be a curve intersecting every curve in $\pi(\hat{\mathcal{B}})$.  By Lemma~\ref{LMP}, for each curve $l$ in $\pi(\hat{\mathcal{B}})$, $\delta_k^m(l)$ covers $\tau_n$ for all but finitely many integers $m$.  Thus, we can find an integer $m$ such that $\delta_k^m(l)$ covers $\tau_n$ for every curve $l$ in $\pi(\hat{\mathcal{B}})$.

Next we apply the automorphism $\delta_k^m$ of $F$ and extend it to an automorphism $\delta_k^m\colon F\times I\to F\times I$.  Let $\hat{\mathcal{B}}'=\delta_k^m(\hat{\mathcal{B}})$ be a new collection of loops in $F\times I$.  So, after projected to $F$, each curve in $\hat{\mathcal{B}}'$ covers $\tau_n$.   

We may view $F\times I$ as a neighborhood of the Heegaard surface $F$ in $S^3$.  Let $l_1', l_2',\dots, l_p'$ be the sequence of loops in $\hat{\mathcal{B}}'$.  Let $M$ be the manifold obtained from $S^3$ by performing $\frac{1}{n_{i}}$-Dehn surgery on each $l_i'$ in $\hat{\mathcal{B}}'$.  Since each surgery slope is of the form $\frac{1}{m}$, a level surface $F\times\{t\}$ remains a Heegaard surface of $M$.  Our task is to show that there are Dehn surgery slopes $\frac{1}{n_i}$ such that (1) $M$ is non-Haken and (2) the Heegaard splitting of $M$ (given by the Heegaard surface $F$) has distance at least $n$.

Consider the level surface $F\times\{t\}$.  When $t$ changes from $0$ to $1$, the level surface $F\times\{t\}$ passes the sequence of horizontal loops $l_1', l_2',\dots, l_p'$ in $\hat{\mathcal{B}}'$.  
The $\frac{1}{n_{i}}$-Dehn surgery on $l_i'$ is the same as the $n_{i}$-fold Dehn twist in $F$ along the curve $l_i=\pi(l_i')$ where $\pi\colon F\times I\to F$ is the projection. 
 Thus we may view the Heegaard splitting of $M$ as obtained from $S^3=V\cup_F W$ by performing a sequence of $n_{i}$-fold Dehn twists along the curves $l_i$ in $\pi(\hat{\mathcal{B}}')$.  We use $f\colon F\to F$ to denote the composition of this sequence of Dehn twists.  In other words, $f=\delta_1^{n_1}\circ\delta_2^{n_2}\circ\cdots\circ\delta_p^{n_p}$, where each $\delta_i^{n_i}$ is the $n_{i}$-fold Dehn twist along the corresponding curve $l_i=\pi(l_{i}')$.  By our construction of $M$, we may choose $f$ so that $f(\mathcal{D})$ and $\mathcal{E}$ are curve systems in the $F$ bounding disk systems in the two handlebodies of the Heegaard splitting of $M$ given by the Heegaard surface $F$.  The main task is to determine the value of each integer $n_i$ ($i=1,\dots,p$).

Let $N$ be the manifold obtained from $S^3-N(\hat{\mathcal{B}}')$ by performing $\frac{1}{n_i}$-Dehn fillings on the $p-2$ boundary tori of $S^3-N(\hat{\mathcal{B}}')$ corresponding to the curves $l_2',\dots, l_{p-1}'$ in $\hat{\mathcal{B}}'$.  So $\partial N$ consists of two tori $T_1$ and $T_p$ corresponding to $l_1'$ and $l_p'$ respectively.  We may view $S^3-N(\hat{\mathcal{B}}')$ as a submanifold of $N$.  By a theorem of Hatcher \cite{Ha}, we may choose the Dehn filling slopes $\frac{1}{n_i}$ ($i=2,\dots, p-1$) not to be the boundary slopes of an orientable incompressible surface in $S^3-N(\hat{\mathcal{B}}')$ disjoint from $T_1\cup T_p$, in other words, any closed orientable incompressible surface in $N$ lies in $S^3-N(\hat{\mathcal{B}}')$ after isotopy.  Next we consider the Dehn filling slopes at $T_1$ and $T_p$.

We use $N_p(m)$ to denote the manifold obtained from $N$ by the $\frac{1}{m}$-Dehn filling on $T_p$.  Similar to the argument above, by Hatcher's theorem \cite{Ha}, we can find $5$ integers $m_1,\dots, m_5$ (in fact there are infinitely many such integers), such that for each $j=1,\dots, 5$, any closed orientable incompressible surface in $N_p(m_j)$ lies in $S^3-N(\hat{\mathcal{B}}')$ after isotopy.  So each $N_p(m_j)$ is a 3-manifold with $\partial N_p(m_j)=T_1$.  Next we consider Dehn filling on $N_p(m_j)$ at $T_1$.

Let $g_j=\delta_2^{n_2}\circ\cdots\circ\delta_{p-1}^{n_{p-1}}\circ\delta_p^{m_j}$ ($j=1,\dots, 5$).  
Recall that every curve $l_i$ in $\pi(\hat{\mathcal{B}}')$ covers the train track $\tau_n$. 
By Lemma~\ref{LMP}, for each $j=1,\dots, 5$, $\delta_1^m(g_j(\mathcal{D}))$ covers $\tau_n$ for all but finitely many integers $m$.  Moreover, by Hatcher's theorem \cite{Ha}, the number of boundary slopes of orientable incompressible surfaces in each $N_p(m_j)$ is finite.  Thus, we can find an integer $n_1$ such that, for every $j=1,\dots, 5$,
\begin{enumerate}
  \item  $\delta_1^{n_1}(g_j(\mathcal{D}))$ covers $\tau_n$, and  
  \item  $\frac{1}{n_1}$ is not a boundary slope of a properly embedded orientable incompressible surface in $N_p(m_j)$.
\end{enumerate}  

Let $f_j=\delta_1^{n_1}\circ g_j=\delta_1^{n_1}\circ\delta_2^{n_2}\circ\cdots\circ\delta_{p-1}^{n_{p-1}}\circ\delta_p^{m_j}$ ($j=1,\dots, 5$).  
Recall that in our construction, the lamination $\mathcal{L}$ has no wave with respect to both $\mathcal{D}$ and $\mathcal{E}$.  Hence a curve carried by $\tau_n$ contains no wave with respect to $\mathcal{E}$.  Since $f_j(\mathcal{D})$ covers $\tau_n$, $f_j(\mathcal{D})$ contains no wave with respect to $\mathcal{E}$ for each $j=1,\dots, 5$.

Now we reverse the roles of  $\mathcal{D}$ and $\mathcal{E}$.  We consider $f_j^{-1}=\delta_p^{-m_j}\circ\delta_{p-1}^{-n_{p-1}}\circ\cdots\circ\delta_1^{-n_1}=\delta_p^{-m_j}\circ h$, where $h=\delta_{p-1}^{-n_{p-1}}\circ\cdots\circ\delta_1^{-n_1}$.   Symmetrically, we consider the maximal fat train track $\tau_{\mathcal{D}}$ with $\mathcal{D}$ as its exceptional fibers.  Recall that the splitting of $\tau_{\mathcal{E}}$ is along the lamination $\mathcal{L}$ which is carried by both $\tau_{\mathcal{D}}$ and $\tau_{\mathcal{E}}$, and the curve $k$ at the beginning is assumed to be close to $\mathcal{L}$ in the projective measured lamination space.  So we may assume that $k$ also covers a maximal train track $\tau'$ derived from $\tau_{\mathcal{D}}$.  Moreover, we may choose the Dehn twist $\delta_k^m$ at the beginning of this section so that the curve $l_p$ in $\pi(\hat{\mathcal{B}}')$ also covers $\tau'$. 
By Lemma~\ref{LMP}, except for $4$ possible integers $m$, $\delta_p^{m}(h(\mathcal{E}))$ covers $\tau'$.  We have 5 integers $m_1,\dots, m_5$ to choose from, so there is an integer $m_j$ such that $\delta_p^{-m_j}(h(\mathcal{E}))$ covers $\tau'$.  Let $n_p$ be this integer $m_j$ and set $f=f_j=\delta_1^{n_1}\circ\delta_2^{n_2}\circ\cdots\circ\delta_p^{n_p}$.  Thus $f^{-1}(\mathcal{E})=\delta_p^{-n_p}(h(\mathcal{E}))$ covers $\tau'$.  As $\mathcal{L}$ contains no wave with respect to both $\mathcal{D}$ and $\mathcal{E}$, each curve carried by $\tau'$ contains no wave with respect to $\mathcal{D}$.  Hence $f^{-1}(\mathcal{E})$ contains no wave with respect to $\mathcal{D}$.  After a composition with $f$, we see that $\mathcal{E}$ has no wave with respect to $f(\mathcal{D})$. 

As $f=f_j$ for some $j\in\{1,\dots, 5\}$, $f(\mathcal{D})$ covers $\tau_n$ and contains no wave with respect to $\mathcal{E}$.  Hence $f(\mathcal{D})$ and $\mathcal{E}$ have no wave with respect to each other.

We have found all our integers $n_i$ ($i=1,\dots, p$). 
Since $f(\mathcal{D})$ covers $\tau_n$ and since $f(\mathcal{D})$ and $\mathcal{E}$ have no wave with respect to each other, by Theorem~\ref{TLM}, the distance of the Heegaard splitting of $M$ is at least $n$.  Moreover, by the properties of $n_i$ and $m_j$ above, if $M$ contains a closed orientable incompressible surface $S$, then $S$ can be isotoped disjoint from the surgery curves and we may view $S\subset  S^3-\hat{\mathcal{B}}'$.  Now Theorem~\ref{Tmain} follows from the following lemma.

\begin{lemma}\label{LHaken}
$M$ is non-Haken.
\end{lemma}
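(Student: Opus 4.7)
The plan is to argue by contradiction, combining the structural information from Section~\ref{SA} with the fact that $F$ (or rather any generic level surface) is a Heegaard surface for $M$.

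Suppose $M$ contains a closed orientable incompressible surface $S$. By the paragraph preceding the lemma, the Hatcher slope choices ensure that $S$ can be isotoped off every surgery solid torus, so $S \subset M$ is disjoint from $\hat{\mathcal{B}}'$. Let $N$ denote the closed neighborhood of the Heegaard surface $F$ in $M$ obtained by $\frac{1}{n_i}$-Dehn filling of $F\times I$ along the loops $\hat{\mathcal{B}}'$; after composing with the homeomorphism $\delta_k^m$ this $N$ is precisely the Dehn-filled collar analyzed in Section~\ref{SA}. Using incompressibility of $S$ in $M$ together with a standard innermost-disk argument, I would first isotope $S$ so that each component of $S \cap N$ is incompressible in $N$: any compressing disk for $S \cap N$ inside $N$ is in particular a disk in $M$, so by incompressibility of $S$ its boundary bounds a disk on $S$, and that disk can be used to remove the compression by an ambient isotopy in $M$.

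Next I would apply the main output of Section~\ref{SA} to each component of $S \cap N$: its image $\phi(S\cap N)$ in the base $S^2 \times I$ is a union of surfaces, each of which is a branched cover of a punctured disk or punctured sphere with branch set $\Gamma$, and after a further isotopy there is a single level $\Sigma \times \{t\}$ disjoint from $\phi(S \cap N)$. Lifting through the double branched cover $\phi$, this isotopy is realized inside $N$ and produces a copy of $S \cap N$ disjoint from the level $F\times\{t\}$. Because this isotopy is supported in $N\subset M$, it extends to an ambient isotopy of $M$ (extending any induced motion on $\partial N = F \times \partial I$ into the handlebodies $V$ and $W$ on either side in the standard way), yielding an isotopic copy of $S$ that misses $F\times\{t\}$.

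For generic $t$ (i.e., $t$ avoiding the heights of the loops $l_i'$), the level surface $F\times\{t\}$ is itself a Heegaard surface of $M$, isotopic to $F$. Hence $S$ lies entirely inside one of the two handlebodies of this Heegaard splitting of $M$. But a handlebody has free fundamental group and therefore contains no closed incompressible surface of positive genus (a positive-genus surface group is not a subgroup of a free group); the only closed incompressible surface in a handlebody is a sphere bounding a ball. This contradicts the assumption that $S$ is an essential closed surface, proving $M$ is non-Haken.

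The main obstacle I expect is step~(3): arranging the isotopy supplied by Section~\ref{SA} so that it can be carried out entirely inside $N$ and then extended to an ambient isotopy of $M$ without destroying embeddedness of $S$. This requires a careful treatment of the curves $S\cap\partial N$ on $F\times\{0\}$ and $F\times\{1\}$ and of how the portions of $S$ lying in the complementary handlebodies are dragged along; once this bookkeeping is done the rest of the argument is routine.
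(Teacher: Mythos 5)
Your proposal is correct and follows essentially the same route as the paper: isotope $S$ off $\hat{\mathcal{B}}'$ using Hatcher's boundary-slope finiteness, push $S\cap (F\times I)$ through the double branched cover and invoke the Section~\ref{SA} analysis to find a level surface $F\times\{t\}$ disjoint from $S$, then conclude that $S$ lies in one handlebody of the Heegaard splitting and hence cannot be incompressible. The one point you flag as a worry (extending the collar isotopy to an ambient isotopy of $M$) is handled in the paper simply by keeping the isotopy supported in the interior of $F\times I$ so it extends by the identity; and your remark that one should ``compose with $\delta_k^m$'' corresponds exactly to the paper's replacement of the involution $\iota$ by $\iota'=\delta_k^m\circ\iota\circ\delta_k^{-m}$ (equivalently, of $\phi$ by $\phi'$) so that $\hat{\mathcal{B}}'$ is invariant.
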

\begin{proof}
Suppose $M$ contains a closed orientable incompressible surface $S$.  By our choices of $n_i$ ($i=1,\dots,p$), we may assume that $S$ is an incompressible surface in $S^3-\hat{\mathcal{B}}'$. 

Recall that $\hat{\mathcal{B}}'=\delta_k^m(\hat{\mathcal{B}})$ for some Dehn twist $\delta_k^m$ and 
we view $F\times I$ as a submanifold of $S^3$ containing $\hat{\mathcal{B}}'$.  
  Since we changed the loops from $\hat{\mathcal{B}}$ to $\hat{\mathcal{B}}'$, we also need to change the involution $\iota$ in section~\ref{SA} to a new involution $\iota'=\delta_k^m\circ\iota\circ\delta_k^{-m}$.  So $\hat{\mathcal{B}}'$ is invariant under $\iota'$.  Let $\phi'\colon F\to S^2$ be the double branched covering determined by $\iota'$ and we extend it to a double branched covering $\phi'\colon F\times I\to S^2\times I$ as in section~\ref{SA}.   

Let $S'=S\cap(F\times I)$. Since $S$ is incompressible in $S^3-\hat{\mathcal{B}}'$, we may assume that $S'$ is incompressible in $(F\times I)-\hat{\mathcal{B}}'$.  By section~\ref{SA}, after isotoping $S$ across loops in $\hat{\mathcal{B}}'$ (which does not affect the isotopy class of $S$ in $M$), we may assume that each component of $\phi'(S')$ is a branched cover of a disk or sphere.  This implies that, after isotopy, there is a level sphere $S^2\times\{t\}$ in $S^2\times I$ disjoint from $\phi'(S')$.  Therefore, after isotopy, there is a level surface $F\times\{t\}$ in $F\times I$ disjoint from $S$.  Since each $F\times\{t\}$ is a Heegaard surface of $M$, this means that $S$ lies in one of the two handlebodies in the Heegaard splitting, contradicting that $S$ is incompressible.
\end{proof}

\end{document}